\newtheorem{theorem}{Theorem}[section]
\newtheorem{lemma}[theorem]{Lemma}
\newtheorem{proposition}[theorem]{Proposition}
\theoremstyle{definition}
\newtheorem{definition}[theorem]{Definition}
\newtheorem{remark}[theorem]{Remark}
\newtheorem{question}[theorem]{Question}
\newtheorem{example}[theorem]{Example}
\theoremstyle{remark}
\title{On Property (FA) for Wreath Products}
\date{April 15, 2010}
\author{Yves Cornulier}
\address{IRMAR, Campus de Beaulieu, 35042 Rennes CEDEX, France}
\email{yves.decornulier@univ-rennes1.fr}
\author{Aditi Kar}
\address{School of Mathematics, University of Southampton, Southampton SO17 1BJ, UK}
\email{A.Kar@soton.ac.uk}
\thanks{The second author is supported by EPSRC grant  EP/F031947/1.}
\subjclass[2000]{Primary 20E22; Secondary 20E08, 20E06, 20F05}
\begin{document}

\begin{abstract} We characterize permutational wreath products with Property~(FA). For instance, the standard wreath product $A\wr B$ of two nontrivial countable groups $A,B$, has Property~(FA) if and only if $B$ has Property~(FA) and $A$ is a finitely generated group with finite abelianisation. We also prove an analogous result for hereditary Property~(FA). On the other hand, we prove that many wreath product with hereditary Property (FA) are not quotients of finitely presented groups with the same property.
\end{abstract}

\maketitle

\section{Introduction}

Property (FA) was introduced by Serre in his monograph \cite{S}: a group $G$ is said to have {\it Property~(FA)} if every isometric action of $G$ on a (simplicial) tree has a fixed point. Serre's fundamental result \cite[Theorem~I.6.15]{S} about Property (FA) says that a group $G$ has Property (FA) if and only if the three following conditions are satisfied
\begin{itemize}
\item $G$ is not a nontrivial amalgam;
\item $G$ has no quotient isomorphic to $\mathbf{Z}$;
\item $G$ is not the union of a properly increasing sequence of subgroups.
\end{itemize}
If $G$ is denumerable, the last condition is equivalent to the requirement that $G$ is finitely generated. In general, it is referred to in the literature as ``$G$ has cofinality $\neq\omega$" and is fulfilled by some uncountable groups \cite{KT}.
Traditional examples of finitely generated groups with Property (FA) include \begin{enumerate}\item finitely generated torsion groups; \item Coxeter groups defined by a Coxeter matrix with no occurrence of $\infty$; \item\label{sp} special linear groups over the integers, $\textnormal{SL}_n(\mathbf{Z})$, for $n \geq 3$; \item\label{ka} more generally, groups with Kazhdan's Property (T);\item\label{rers} irreducible lattices in semisimple Lie groups of real rank at least two, e.g.\ $\textnormal{SL}_2(\mathbf{Z}[\sqrt{2}])$.\end{enumerate}
The first three of these examples were explained by Serre in \cite{S}; (\ref{ka}) was proved by Watatani in \cite{W}, using the characterisation of property (T) in terms of affine actions on Hilbert spaces; and finally (\ref{rers}) was proved by Margulis (see \cite{margulis}). 

The aim of this article is to investigate Property (FA) for wreath products. We recall that given two groups $A$, $B$ and a $B$-set $X$, their {\it (permutational) wreath product} is defined as the group
$$A \wr_X B := A^{(X)}\rtimes B,$$ where $$A^{(X)}=\bigoplus_{x \in X} A_x$$ is the direct sum of isomorphic copies $A_x$ of~$A$ indexed by $X$. In the special case when $X=B$ with $B$ acting by left multiplication on itself, one obtains the {\it standard} wreath product and this, we denote simply as $A \wr B$. If $A$ and $B$ are finitely generated and $X$ has finitely many $B$-orbits, then $A\wr_X B$ is finitely generated as well. 

\begin{theorem}\label{t1} Consider the permutational wreath product $G=A \wr_X B$. Assume that $A\neq\{1\}$, $X\neq\emptyset$ and $X$ has finitely many $B$-orbits, each of which contains more than one element. The following are equivalent
\begin{itemize}
\item $G$ has Property (FA);
\item $B$ has Property (FA) and $A$ is a group with finite abelianisation, which cannot be expressed as the union of a properly increasing sequence of subgroups.
\end{itemize}\end{theorem}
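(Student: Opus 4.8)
The plan is to prove the two implications separately. For the \emph{forward} direction, suppose $G=A\wr_X B$ has Property~(FA). Since $B=G/A^{(X)}$ is a quotient of $G$ and Property~(FA) passes to quotients, $B$ has Property~(FA). If $A$ were the union of a properly increasing sequence $A_1\subsetneq A_2\subsetneq\cdots$, then each $A_n^{(X)}$ would be $B$-invariant (the $B$-action permutes coordinates), so $A_1^{(X)}\rtimes B\subsetneq A_2^{(X)}\rtimes B\subsetneq\cdots$ would be a properly increasing sequence of subgroups of $G$ with union $G$ (here $X\neq\emptyset$ is used), contradicting Property~(FA); so $A$ has cofinality $\neq\omega$.

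The remaining assertion, that $A^{\mathrm{ab}}$ is finite, is the crux, and I would prove it contrapositively: if $A^{\mathrm{ab}}$ is infinite, then $G$ lacks Property~(FA). Writing $C=A^{\mathrm{ab}}$, the group $G$ surjects onto $C\wr_X B$, and, after fixing one $B$-orbit $O$ (which has $|O|\ge 2$) and killing the base on all other orbits, onto $C^{(O)}\rtimes B$. If $C$ has $\mathbf{Z}$ as a quotient, the coinvariant computation $(C\wr_X B)^{\mathrm{ab}}=C^{k}\times B^{\mathrm{ab}}$ (with $k\ge 1$ the number of orbits) shows $G$ has $\mathbf{Z}$ as a quotient, so $G$ lacks Property~(FA). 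Otherwise one replaces $C$ by a suitably chosen infinite quotient of cofinality $\omega$: when $O$ is finite, $C^{(O)}$ contains a $B$-invariant diagonal copy of $C$ and $C^{(O)}\rtimes B$ maps onto a generalized dihedral group over $C$, which must then be analysed; when $O$ is infinite, one uses a countably infinite quotient $D$ of $C$ together with the orbitwise augmentation to map $G$ onto a group of the form $D^{(k')}\times B$ with $D$ countable infinite, hence of cofinality $\omega$, hence without Property~(FA). \textbf{The main obstacle is to carry this out uniformly}, i.e.\ to show that infinitude of $A^{\mathrm{ab}}$ always forces such a quotient; for uncountable $A$ this needs genuine input about the structure of infinite abelian groups and of generalized dihedral groups over them (for instance that $D(\hat{\mathbf{Z}}_p)$ fails Property~(FA)).

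For the \emph{converse}, assume $B$ has Property~(FA), $A^{\mathrm{ab}}$ is finite, and $A$ has cofinality $\neq\omega$; let $G$ act on a tree $T$ without inversions. I first record a lemma: a group $\Gamma$ with $\Gamma^{\mathrm{ab}}$ finite and cofinality $\neq\omega$ that fixes an end of a tree also fixes a vertex --- indeed a hyperbolic element pointing at that end would give a $\mathbf{Z}$-quotient of $\Gamma$, while if all elements are elliptic then the pointwise stabilisers of the vertices of a ray converging to the end form an increasing chain with union $\Gamma$, which must therefore stabilise. Since $B$ has Property~(FA) it fixes a vertex $v_0$. \emph{Case 1: every $A_x$ fixes a vertex.} Then every element of $A^{(X)}$ is elliptic, since finitely many commuting elliptic isometries of a tree have a common fixed vertex (if the fixed subtrees of two commuting elliptics were disjoint, the geodesic joining them would be invariant, forcing a common fixed vertex). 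By the standard fact that a tree action with all elements elliptic fixes a vertex or a unique end, $A^{(X)}$ fixes a vertex or an end $\xi$. In the latter case $\xi$ is unique, $B$ fixes $\xi$ (it permutes the fixed ends of the normal subgroup $A^{(X)}$), hence $B$ fixes the ray $(v_0,v_1,\dots)$ toward $\xi$; for each of the finitely many orbits $O$ a chosen $A_{y_O}$ fixes some tail $[v_{n_O},\xi)$, so every $A_y$ with $y\in O$ fixes $b\cdot[v_{n_O},\xi)=[v_{n_O},\xi)$, whence $A^{(O)}$ fixes $[v_{n_O},\xi)$ pointwise and, with $N=\max_O n_O$, the whole group $A^{(X)}=\prod_O A^{(O)}$ fixes $v_N$ --- contradicting that $A^{(X)}$ has no fixed vertex. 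So $A^{(X)}$ fixes a vertex, $\mathrm{Fix}(A^{(X)})$ is a nonempty subtree invariant under $B$ (which normalises $A^{(X)}$), and Property~(FA) of $B$ gives a $G$-fixed vertex there.

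\emph{Case 2: some $A_{x_0}$ fixes no vertex}, with orbit $O$. Then no $A_y$ ($y\in O$) fixes a vertex, and by the lemma none fixes an end, so each $A_y$ has a unique minimal invariant subtree $T_y$. For $z\neq y$ the subgroup $A_z$ commutes with $A_y$ and hence preserves $T_y$; the bridge argument then forces all $T_y$ with $y\in O$ to coincide with a single subtree $T_O$, which is invariant under all of $G$. Choosing $y\neq y'$ in $O$ (possible since $|O|\ge 2$): if $A_y$ acted non-elementarily on $T_O$, its centraliser in $\mathrm{Isom}(T_O)$ would be trivial, contradicting that $A_{y'}$ commutes with it and acts without fixed vertex; hence $A_y$ acts elementarily and minimally with no fixed vertex or end, which forces $T_O$ to be a line. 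Then $G$ acts on this line, giving a homomorphism $G\to\mathrm{Isom}(\mathbf{Z})=D_\infty$ under which each $A_y$ has no fixed vertex, so its image contains a nontrivial translation. An infinite cyclic image would give a $\mathbf{Z}$-quotient of $A_y\cong A$, impossible as $A^{\mathrm{ab}}$ is finite; an infinite dihedral image would force the commuting subgroup $A_{y'}$ into the centraliser of $D_\infty$ in itself, which is trivial --- again a contradiction. So Case 2 never occurs, and $G$ always fixes a vertex of $T$; therefore $G$ has Property~(FA).
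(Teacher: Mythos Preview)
Your converse direction is essentially correct and roughly parallel to the paper's, though organized differently. The paper packages the hard work into a single proposition: assuming $T^B\neq\emptyset$, $T^W=\emptyset$, and each $A_x$ not parabolic, it shows $W$ preserves a unique line on which it acts by nontrivial translations, then finishes via $\mathrm{Hom}(A^{(X)},\mathbf{Z})=0$. Your Case~1 (every $A_x$ elliptic) reaches $T^W\neq\emptyset$ via the unique fixed end and the ray from a $B$-fixed vertex, whereas the paper argues instead with the point of $T^{A_x}$ closest to $T^B$; both arguments work and use the finiteness of the orbit set in the same place. In Case~2 your assertion that a non-elementary action has trivial centraliser is not literally correct, but the intended argument is: a hyperbolic element of $A_{y'}$ that preserves the axis of a hyperbolic element of $A_y$ must have that same axis, so all hyperbolic axes coincide and $T_O$ is a line. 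With that fix, your $D_\infty$ endgame is fine.

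The real problem is the forward direction. You correctly write down the coinvariant computation $G^{\mathrm{ab}}\cong (A^{\mathrm{ab}})^{Y}\times B^{\mathrm{ab}}$ (with $Y=B\backslash X$ finite), but you use it only to dispose of the case where $A^{\mathrm{ab}}$ already surjects onto $\mathbf{Z}$, and then embark on an incomplete case analysis for infinite $A^{\mathrm{ab}}$ with no $\mathbf{Z}$-quotient. This is unnecessary. The computation shows that $A^{\mathrm{ab}}$ is itself a quotient of $G$, hence $\mathrm{Hom}(A,\mathbf{Z})=0$; combined with what you have \emph{already} proved---that $A$ has cofinality $\neq\omega$---this forces $A^{\mathrm{ab}}$ to be finite in the setting at hand (cofinality $\neq\omega$ gives $A$ finitely generated in the countable case, hence $A^{\mathrm{ab}}$ is finitely generated abelian with no $\mathbf{Z}$-quotient). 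That is exactly the paper's argument, in one line. Your proposed detour through generalized dihedral groups and augmentation maps is both harder and, as you yourself flag, not completed.
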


Contrast with the following result on property (T) groups \cite[Proposition~2.8.2]{V}: the wreath product $A\wr B$ of two non-trivial groups $A,B$ has property (T) if and only if $A$ has property (T) and $B$ is finite.

The following is a well-known problem (it appears for instance as \cite[Question 7]{Cw} and in \cite{yves}).

\begin{question}[fg versus fp] \label{fp} Is every finitely generated group with Property (FA) the quotient of a finitely presented group with property~(FA)? \end{question}

It can also be stated as ``is Property (FA) open in the space of marked groups?" (see \cite[Section~2.6(h)]{CG}). The analogous question for some other fixed point properties has a positive answer 
\begin{itemize}
\item for Property (F$\mathbf{R}$) (fixed point property on $\mathbf{R}$-trees), a result of Culler and Morgan \cite[Proposition 4.1]{CM}.
\item Property (FH) (fixed point property on Hilbert spaces, also known as Kazhdan's Property (T)), a result independently due to Shalom and Gromov (\cite[Theorem 6.7]{Y} and \cite[3.8.B]{grom})
\item more generally, again by Gromov \cite[3.8.B]{grom}, the fixed point property on any class of metric spaces which is stable under ``scaling ultralimits", e.g.~the class of all $\textnormal{CAT}(0)$-spaces.
\end{itemize}

It is an old open question \cite[Question~A, p.286]{shalen} whether Property (FA) implies the {\it a priori} stronger Property (F$\mathbf{R}$). Of course a positive answer would imply a positive answer to Question \ref{fp}.

Some evidence for a positive answer for Question \ref{fp} is given by the case of wreath products. From the proof of Theorem \ref{t1}, one can deduce the proposition below. 
\begin{proposition}\label{fpfa}
Let $A$ and $B$ be finitely presented groups and let $X$ be a $B$-set with finitely many orbits. If in addition, $A$ has finite abelianisation and $B$ has Property (FA), then $A\wr_X B$ is the quotient of a finitely presented group with Property~(FA).
\end{proposition}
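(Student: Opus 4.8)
The plan is to exhibit an explicit finite presentation. Fix finite presentations $A=\langle S_A\mid R_A\rangle$, $B=\langle S_B\mid R_B\rangle$, write $X=\coprod_{i=1}^n B/C_i$, and note that by Theorem~\ref{t1} we already know $G:=A\wr_X B$ has (FA); if $X$ is finite then $G$ is itself finitely presented and there is nothing to do, so we may assume $X$ infinite, and (as each orbit has more than one element) we may choose the orbit representatives $x_i$ so that each is moved by some generator $s_i\in S_B$. Let $\widetilde G$ be presented by generators $S_B\cup\{a^{(i)}:a\in S_A,\ 1\le i\le n\}$, subject to $R_B$, to a copy $R_A^{(i)}$ of $R_A$ in the letters $a^{(i)}$ for each $i$, and to the finite family $\Gamma$ consisting of $[a^{(i)},a'^{(j)}]$ for $i\ne j$, of $[a^{(i)},s\,a'^{(i)}s^{-1}]$ for $s\in S_B$ with $sx_i\ne x_i$, and of $[s,a^{(i)}]$ for $s\in S_B$ with $sx_i=x_i$. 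Sending $S_B$ identically to $B$ and $a^{(i)}\mapsto a_{x_i}\in A_{x_i}$ defines a surjection $\widetilde G\twoheadrightarrow G$ (every relation of $\Gamma$ holds in $G$: distinct copies commute, and $C_i$ centralises $A_{x_i}$; and the images generate $G$ since $A_{bx_i}=bA_{x_i}b^{-1}$). The composite $B\to\widetilde G\twoheadrightarrow G\twoheadrightarrow B$ is the identity, so $B$ is a retract of $\widetilde G$; writing $\widetilde N=\ker(\widetilde G\to B)$ we get $\widetilde G=\widetilde N\rtimes B$ with $\widetilde N=\langle A^{(i)}_b:1\le i\le n,\ b\in B\rangle$, $A^{(i)}_b:=b\langle S_A^{(i)}\rangle b^{-1}\cong A$.

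Next I would dispose of the easy part of Serre's criterion for $\widetilde G$: it is finitely generated by construction, so not a properly increasing union of subgroups; and abelianising the presentation kills every commutator in $\Gamma$ and leaves a quotient of $B^{\mathrm{ab}}\oplus\bigoplus_{i}A^{\mathrm{ab}}$, which is finite since $B$ has (FA) and $A$ has finite abelianisation, so $\widetilde G$ has no quotient $\mathbf Z$. It therefore remains to show $\widetilde G$ has no nontrivial amalgam decomposition, i.e. that every action of $\widetilde G$ on a tree $T$ has a fixed point, and this is carried out by following the proof of Theorem~\ref{t1}. Assume $\widetilde G$ acts on $T$ without global fixed point. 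Since $B\cong\widetilde B\le\widetilde G$ has (FA), $T^B\ne\emptyset$; if $\widetilde N$ fixed a point then $T^{\widetilde N}$ would be a nonempty $B$‑invariant subtree and $B$ would fix a point of it, making $\widetilde G$ fix a point — contradiction, so $\widetilde N$ fixes no point. If every finitely generated subgroup of $\widetilde N$ fixed a point, $\widetilde N$ would fix a unique end $\xi$, hence so would $\widetilde G$; the associated translation homomorphism $\widetilde G\to\mathbf Z$ is zero, so $\widetilde G$ is an increasing union of vertex stabilisers and, being finitely generated, fixes a vertex — contradiction. So $\widetilde N$ has a hyperbolic element, and replacing $T$ by the minimal $\widetilde N$‑invariant subtree (which is $\widetilde G$‑invariant since $\widetilde N\trianglelefteq\widetilde G$) we may assume $\widetilde N$ acts minimally with hyperbolic elements.

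Now the substantive case analysis. If some copy $A^{(i_0)}_{b_0}$ already contains a hyperbolic element, then conjugating by $b_0^{-1}$ we get a hyperbolic $a\in\langle S_A^{(i_0)}\rangle$ with axis $L$, and the relations of $\Gamma$ are tailored to spread $L$: for $s\in S_B$ with $sx_{i_0}\ne x_{i_0}$, the element $sas^{-1}\in A^{(i_0)}_s$ is hyperbolic and commutes with $a$, hence has axis $L$, so $s(L)=L$; for $s\in S_B$ with $sx_{i_0}=x_{i_0}$ the relation $[s,a^{(i_0)}]$ makes $s$ centralise $a$, so again $s(L)=L$; thus $B$ stabilises $L$. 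Similarly $\langle S_A^{(i_0)}\rangle$ commutes with $s_{i_0}as_{i_0}^{-1}$ (axis $L$) and every $\langle S_A^{(j)}\rangle$, $j\ne i_0$, commutes with $a$, so all $A^{(i)}_e$ — hence all $A^{(i)}_b$ — stabilise $L$, and therefore $\widetilde G$ stabilises $L$. This yields $\phi\colon\widetilde G\to\operatorname{Aut}(L)\cong D_\infty$ with $\phi(a)$ a nontrivial translation; $\phi(B)$ fixes a point of $L$, so $|\phi(B)|\le 2$; if some $\phi(A^{(i)}_e)$ contained a nontrivial translation $\tau$, then $\phi(A^{(i)}_{s_i})$ would commute with $\tau$, lie in the cyclic translation group, hence be trivial (finite abelianisation), contradicting $\phi(A^{(i)}_{s_i})=\phi(s_i)\phi(A^{(i)}_e)\phi(s_i)^{-1}\ni\tau^{\pm1}$; so no $\phi(A^{(i)}_b)$ has a translation, contradicting $\phi(a)\ne1$. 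The same symmetrisation (using that two commuting reflections in $D_\infty$ are equal, that $\phi(s)$ normalises $\phi(\langle S_A^{(i)}\rangle)$, and that $\widetilde G/\langle\langle B\rangle\rangle\cong\prod_i A^{\mathrm{ab}}$ is finite) handles the case where no single copy is hyperbolic but the minimal $\widetilde N$‑tree is a line: $\phi(\widetilde N)$ then collapses into $\phi(B)\le\mathbf Z/2$, again contradicting the presence of a hyperbolic element.

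The one genuinely new configuration — and the step I expect to be the main obstacle — is when all copies $A^{(i)}_b$ act elliptically while $\widetilde N$ still acts minimally with hyperbolic elements on a tree $T$ that is not a line (a "general type" $\widetilde N$‑action); this does not occur in Theorem~\ref{t1} precisely because there the copies $A_y$ genuinely commute. To attack it I would work with the fixed subtrees $F_{i,b}=\operatorname{Fix}(A^{(i)}_b)$: the relations of $\Gamma$ force $F_{i,b}\cap F_{j,b'}\ne\emptyset$ for $\Gamma$‑adjacent copies, $B$ permutes the $F_{i,b}$ and fixes a point $p_0$, and $\langle\langle B\rangle\rangle$ is of finite index in $\widetilde G$ and generated by point stabilisers; the goal is to combine a Helly‑type argument over this family with the homogeneity supplied by the $B$‑action — and, if needed, with the fact that $G=A\wr_X B$ itself has (FA), applied once one checks that $\ker(\widetilde G\to G)$ must fix a point — to conclude that $\widetilde N$ after all fixes a point or an end, contradicting general type. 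Closing this case finishes the verification that $\widetilde G$ has (FA); since $\widetilde G$ is finitely presented and surjects onto $A\wr_X B$, the proposition follows.
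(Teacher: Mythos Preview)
Your construction of $\widetilde G$ is exactly the paper's group $K$ (in the transitive case), and your treatment of the case where some copy $A^{(i_0)}_{b_0}$ contains a hyperbolic element is correct and parallels the paper's second case ($T^{A_1}=\emptyset$). The gap is in what you call ``the one genuinely new configuration'': every copy $A^{(i)}_b$ elliptic while $\widetilde N$ is not. You flag this as the main obstacle and only sketch a Helly-type attack; but this is precisely the case the paper handles cleanly, and it is \emph{not} new relative to Theorem~\ref{t1} --- it is the first bullet in the proof of Proposition~\ref{propess}, transplanted verbatim.

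Here is what you are missing. Since each $A^{(i)}_b\cong A$ is finitely generated, ``no hyperbolic element'' forces $T^{A^{(i)}_1}\neq\emptyset$ (Lemma~\ref{cof} rules out parabolic). Now your own relators do all the work: for $s\in S_B$ with $sx_i\neq x_i$ you imposed $[A^{(i)}_1,A^{(i)}_s]=1$, so by Lemma~\ref{lem} the subtrees $T^{A^{(i)}_1}$ and $T^{A^{(i)}_s}$ meet; for $s$ with $sx_i=x_i$ you imposed $[s,A^{(i)}_1]=1$, so $A^{(i)}_s=A^{(i)}_1$. Let $u^b$ be the vertex of $T^{A^{(i)}_b}$ closest to $T^B$. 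Since $s\cdot T^B=T^B$ and $s\cdot T^{A^{(i)}_1}=T^{A^{(i)}_s}$, the distances $d(T^B,T^{A^{(i)}_1})$ and $d(T^B,T^{A^{(i)}_s})$ coincide; picking $v\in T^{A^{(i)}_1}\cap T^{A^{(i)}_s}$ and reading off the geodesic from $T^B$ to $v$ gives $u^1=u^s$. But also $s\cdot u^1=u^s$, hence $s\cdot u^1=u^1$ for every $s\in S_B$, so $u^1\in T^B\cap T^{A^{(i)}_1}$. Now $u^1$ is fixed by $B$ and by each $A^{(i)}_1$, hence by $\widetilde G$.

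The point you seem to have overlooked is that this closest-point trick needs only the \emph{local} commutation relations you already built into $\widetilde G$ (between $A^{(i)}_1$ and its $S_B$-neighbours), not the full commutation of all copies present in $A\wr_X B$. Once you insert this argument, your proof is complete and coincides with the paper's; your subdivision into ``line'' versus ``general type'' is unnecessary, and the detour through $\ker(\widetilde G\to G)$ can be dropped.
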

Note that Baumslag \cite{B} proved that a wreath product of non-trivial finitely presented groups $A\wr B$ is finitely presented only when $B$ is finite.

\begin{definition} A group $G$ has {\it hereditary Property (FA)} if $G$ and all its finite index subgroups have Property~(FA).
\end{definition}

It is natural to address Question \ref{fp} with Property (FA) replaced by hereditary (FA). In this situation, the answer turns out to be negative and wreath products provide a large class of elementary examples.

\begin{theorem}\label{lastt}
Let $G=A\wr B$ be the standard wreath product of two finitely generated groups. Assume that $B$ is an infinite, residually finite group and that $A$ has at least one non-trivial finite quotient. Then every finitely presented group mapping onto $G$ has a finite index subgroup with a surjective homomorphism onto a non-abelian free group.
\end{theorem}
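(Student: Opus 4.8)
The plan is to locate, inside any finitely presented group $\Gamma$ that surjects onto $G=A\wr B$, a finite index subgroup satisfying a Golod--Shafarevich inequality for some prime, and then to invoke the theorem that a finitely presented Golod--Shafarevich group is large. Fix a finite presentation of $\Gamma$ with $m$ relators. Picking a nontrivial finite quotient $A\twoheadrightarrow Q$ and composing gives a surjection $\phi\colon\Gamma\twoheadrightarrow Q\wr B$, so we may replace $A$ by $Q$. I shall run the argument assuming $Q$ may be taken to be $\mathbf{Z}/p\mathbf{Z}$ for some prime $p$; this is automatic unless $A$ is perfect, since a finitely generated non-perfect group has a nontrivial finite cyclic quotient.

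Here is where residual finiteness enters: $B$ is infinite and residually finite, so it has finite quotients of arbitrarily large order, while $m$ is fixed. Pick $q\colon B\twoheadrightarrow\bar B$ with $n:=|\bar B|>4m$, and let $\pi\colon Q\wr B\to Q\wr\bar B=\mathbf{F}_p[\bar B]\rtimes\bar B$ be induced by $q$. The base $\mathbf{F}_p[\bar B]$ is normal of index $n$, so $H:=(\pi\phi)^{-1}\bigl(\mathbf{F}_p[\bar B]\bigr)$ has index $n$ in $\Gamma$ and surjects, via $\pi\phi$, onto $\mathbf{F}_p[\bar B]\cong(\mathbf{Z}/p)^{\,n}$; thus $\dim_{\mathbf{F}_p}H_1(H;\mathbf{F}_p)\ge n$. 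On the other hand $H$ is finitely presented and, by Reidemeister--Schreier, has a presentation with at most $nm$ relators, so $\dim_{\mathbf{F}_p}H_2(H;\mathbf{F}_p)\le nm$. As $n>4m$,
\[
\bigl(\dim_{\mathbf{F}_p}H_1(H;\mathbf{F}_p)\bigr)^2\ \ge\ n^2\ >\ 4nm\ \ge\ 4\,\dim_{\mathbf{F}_p}H_2(H;\mathbf{F}_p),
\]
which yields the Golod--Shafarevich inequality for $H$ at the prime $p$. Since a finitely presented Golod--Shafarevich group is large (Lackenby), $H$ --- hence $\Gamma$ --- has a finite index subgroup mapping onto a nonabelian free group.

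The step I expect to be the main obstacle is the remaining case, in which $A$, and therefore every finite quotient $Q$ of $A$, is perfect. Then $H_1(Q^{\bar B};\mathbf{F}_p)=0$ for every prime $p$, the surjection $H\twoheadrightarrow Q^{\bar B}$ carries no mod-$p$ first homology, and the estimate above collapses. To handle it one needs another device forcing the mod-$p$ homology of finite index subgroups of $\Gamma$ to grow linearly in the index; a natural idea is to run a Golod--Shafarevich estimate in which the mod-$p$ abelianisation of the base group is replaced by the augmentation submodule of a permutation $\mathbf{F}_p$-module attached to a faithful transitive action of the finite simple quotient of $A$, still exploiting the arbitrarily large finite quotients of $B$ --- but carrying this out while keeping the indices of the relevant subgroups under control is the part I would expect to require real care.
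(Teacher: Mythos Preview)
Your Golod--Shafarevich approach is genuinely different from the paper's, and in the case where $A$ has a nontrivial cyclic quotient the mechanism is essentially sound (the constant in $n>4m$ may need a small adjustment, and the quantity one should bound for GS is the relator number of the pro-$p$ completion rather than $\dim_{\mathbf{F}_p}H_2(H;\mathbf{F}_p)$, but this only helps). The perfect case, however, is a real gap rather than a detail. If $A$ is, say, a nonabelian finite simple group --- which the hypothesis certainly allows --- then every finite quotient $Q$ is perfect, and the surjection $H\twoheadrightarrow Q^{\bar B}$ contributes nothing to $H_1(H;\mathbf{F}_p)$. Your suggested repair, passing to a subgroup of $Q^{n}$ with large elementary abelian $p$-quotient, runs into the obstacle that any such subgroup (a product of Sylows, a product of point stabilisers, etc.) has index in $Q^{n}$ growing \emph{exponentially} in $n$, while the Reidemeister--Schreier relator count scales with that index and swamps the GS inequality. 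I do not see how to rescue the homological approach uniformly in $A$.

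The paper's argument sidesteps this entirely by a combinatorial route with no mod-$p$ content. One writes $A\wr B$ as the direct limit of groups $G_k$ obtained from $A\ast B$ by imposing only finitely many of the commutation relators $[A,uAu^{-1}]$; a finitely presented group surjecting onto $A\wr B$ therefore already surjects onto some $G_k$. Each $G_k$ decomposes as $M\rtimes B$, where $M$ is the graph product of copies of $A$ indexed by $B$, with edges determined by the finite set $F_k\subset B$. Replacing $A$ by any nontrivial finite quotient $A_1$ and $B$ by a finite quotient $B'$ (residual finiteness) chosen large enough that three elements $1,c,d$ survive and remain pairwise non-adjacent in the graph, one obtains a finite-index subgroup $M(A_1,B',F_k)$ mapping onto $A_1\ast A_1\ast A_1$ simply by killing the copies of $A_1$ at all other vertices. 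A free product of three nontrivial finite groups is virtually nonabelian free whether or not $A_1$ is perfect, so the conclusion is uniform. The moral: the obstruction that blocks your approach --- perfectness of $A$ --- is invisible once one works with free products of finite groups rather than with mod-$p$ homology.
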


The next theorem, which relies on Theorem \ref{t1} and further arguments, shows how to chose the group $G$ from Theorem \ref{lastt} to have hereditary (FA). 
\begin{theorem}\label{hfa}
Let $G=A\wr B$ be a wreath product of finitely generated groups, with $B$ infinite.
The following are equivalent
\begin{itemize}
\item $G$ has hereditary Property (FA);
\item $B$ has hereditary Property (FA) and $A$ has finite abelianisation.
\end{itemize}\end{theorem}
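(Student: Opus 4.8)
The plan is to treat the two implications separately; Theorem~\ref{t1} does the real work in the easy direction. Write $G=A^{(B)}\rtimes B$, let $\pi\colon G\to B$ be the projection, and set $A'=[A,A]$; the case $A=\{1\}$ is trivial, so assume $A\neq\{1\}$. For ``$\Rightarrow$'': hereditary Property~(FA) implies Property~(FA), so Theorem~\ref{t1} (applied with $X=B$, a single infinite $B$-orbit) already gives that $B$ has Property~(FA) and $A$ has finite abelianisation. For the hereditary statement about $B$, let $B_0\le B$ have finite index; then $A^{(B)}\rtimes B_0$ has finite index in $G$ and hence Property~(FA). But $A^{(B)}\rtimes B_0$ is the permutational wreath product $A\wr_B B_0$, where $B$ is the $B_0$-set given by left translation, which has finitely many $B_0$-orbits, all infinite; so Theorem~\ref{t1} forces $B_0$ to have Property~(FA). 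As $B_0$ was arbitrary, $B$ has hereditary Property~(FA).

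For ``$\Leftarrow$'', assume $A$ has finite abelianisation and $B$ has hereditary Property~(FA); fix a finite-index $G_0\le G$, and put $N_0=G_0\cap A^{(B)}\trianglelefteq G_0$, $B_0=\pi(G_0)$, so $G_0/N_0\cong B_0$ has finite index in $B$ and hence Property~(FA). By Serre's criterion I must check that $G_0$ is finitely generated (clear), that $G_0^{\mathrm{ab}}$ is finite, and that every action of $G_0$ on a simplicial tree has a fixed vertex. The structural input is the following \emph{Claim}: every finite-index subgroup $H\le G$ contains $(A')^{(B)}$. Indeed $H$ contains $\ker(G\to Q)$ for a finite quotient $Q$; the image of $A^{(B)}$ in $Q$ is generated by the images of the factors $A_b$, which pairwise commute and are mutually conjugate in $Q$ via the $B$-action. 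If one of these images were non-abelian the $|B|$ of them would be pairwise distinct subgroups of the finite group $Q$ --- impossible since $B$ is infinite. So the image of $A^{(B)}$ in $Q$ is abelian, i.e. $(A')^{(B)}\subseteq\ker(G\to Q)\subseteq H$.

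Granting the Claim, $G_0$ contains $M:=(A')^{(B)}\trianglelefteq G$, and $G_0/M$ has finite index in $G/M=A^{\mathrm{ab}}\wr B$. In the right-exact sequence $M/[G_0,M]\to G_0^{\mathrm{ab}}\to(G_0/M)^{\mathrm{ab}}\to0$ both outer terms are finite. For $(G_0/M)^{\mathrm{ab}}$: a finite-index subgroup of $C\wr B$ with $C=A^{\mathrm{ab}}$ finite meets $C^{(B)}$ in a finite-index subgroup whose $B$-coinvariants are finite --- Pontryagin-dualise to an invariant subgroup of a permutation module over a finite group --- while its image in $B$ has Property~(FA), hence finite abelianisation. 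For $M/[G_0,M]$: finiteness of the index of $N_0$ in $A^{(B)}$ together with the fact that $B_0$ has infinite orbits on $B$ forces every coordinate projection $N_0\to A_b$ to be onto, and then $[G_0,M]$ contains, coordinate by coordinate, the full augmentation submodule, so the $b$-th contribution to $M/[G_0,M]$ is a quotient of $((A')^{\mathrm{ab}})_{A^{\mathrm{ab}}}$; but the Lyndon--Hochschild--Serre spectral sequence for $A'\trianglelefteq A$ degenerates rationally (finite groups have no rational homology in positive degrees and $A^{\mathrm{ab}}$ is finite), whence $((A')^{\mathrm{ab}}\otimes\mathbf Q)_{A^{\mathrm{ab}}}=0$ and the free part dies, the leftover torsion being finite, and finally the finitely many $B_0$-orbits on $B$ reduce everything to a finite group. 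The same argument applies to any finite-index subgroup of $G$; in particular none maps onto $\mathbf Z$.

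Now suppose $G_0$ acts on a tree $T$ without inversions and without a fixed vertex. Since $G_0$ is finitely generated with no $\mathbf Z$-quotient, it contains a hyperbolic element (a finitely generated group with no $\mathbf Z$-quotient acting on a tree either fixes a vertex or contains a hyperbolic element). Inspect $N_0$. If $N_0$ fixes a vertex, then $T^{N_0}$ is a non-empty $G_0$-invariant subtree on which $G_0$ acts through $B_0$; Property~(FA) of $B_0$ supplies a $G_0$-fixed vertex, contradiction. If $N_0$ has no hyperbolic element but no fixed vertex, write the countable group $N_0$ as an increasing union of finitely generated subgroups, each fixing a subtree by Serre's lemma; since the intersection of these subtrees is $T^{N_0}=\emptyset$, they shrink to a unique end $\xi$, which is $G_0$-invariant by normality of $N_0$, and combined with the hyperbolic element this makes the horocyclic homomorphism $G_0\to\mathbf Z$ surjective --- contradicting finiteness of $G_0^{\mathrm{ab}}$. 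Lastly, if $N_0$ contains a hyperbolic element $g_0$ with axis $\ell$ and finite support $S\subseteq B$, then $\beta S\cap S=\emptyset$ for all but finitely many $\beta\in B_0$, so for any lift $\tilde\beta\in G_0$ of such a $\beta$ the hyperbolic element $\tilde\beta g_0\tilde\beta^{-1}$ has support disjoint from $S$, hence commutes with $g_0$ and has axis $\tilde\beta\ell=\ell$; similarly (choosing $\beta S$ disjoint from the support of a given $n\in N_0$) every element of $N_0$ preserves $\ell$. Thus the stabiliser of $\ell$ in $G_0$ contains $N_0$ and a lift of a cofinite subset of $B_0$, so it is all of $G_0$. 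The map $G_0\to\mathrm{Isom}(\ell)\cong D_\infty$ has infinite image (containing the non-trivial translation induced by $g_0$), so an index-$\le2$ subgroup of $G_0$ surjects onto $\mathbf Z$, contradicting the previous paragraph. All cases being excluded, $G_0$ has Property~(FA).

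The hardest points, I expect, are the Claim and what it buys: the Claim is exactly where infiniteness of $B$ enters, and promoting it to ``every finite-index subgroup of $G$ has finite abelianisation'' needs both the rational-homology computation for $A'\trianglelefteq A$ and some care with coinvariants of the non-finitely-generated module $(A')^{(B)}$. The geometric case where $N_0$ contains a hyperbolic element --- where one traps a line using the $B_0$-translates of that element --- is the other delicate spot, but it is robust once finite-index subgroups of $G$ are known to have no $\mathbf Z$-quotient.
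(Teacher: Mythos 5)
Your argument is correct in outline and, as far as I can check, in substance, but it takes a genuinely different route from the paper in the hard direction. The easy direction coincides with the paper's (apply Theorem~\ref{t1} to $G$ and to the finite-index subgroups $A^{(B)}\rtimes B_0=A\wr_B B_0$). For the converse, the paper never isolates the statement you make central -- that \emph{every} finite-index subgroup of $G$ has finite abelianisation; instead it uses Gruenberg to see that a finite-index subgroup $H$ contains $D^{(B)}$ with $D=[A,A]$, shows $T^{D^{(B)}}\neq\emptyset$ by invoking its main geometric Proposition~\ref{propess} (unique invariant line, translations) together with a Gruenberg/residual-finiteness argument to kill the resulting map to $D_\infty$, and then reduces to the special case of a finite abelian base, which is handled by Proposition~\ref{p2} (the normal-subgroup-without-free-subgroups criterion) after excluding $\mathbf Z$- and $D_\infty$-quotients. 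You bypass Propositions~\ref{propess} and~\ref{p2} entirely: you prove the finite-abelianisation statement algebraically (Gruenberg-type claim, surjectivity of the coordinate projections of $N_0$, finiteness of $A'/[A,A']$ via the five-term/LHS sequence, and the coinvariant computation in $A^{\mathrm{ab}}\wr B$), and then run a self-contained elliptic/parabolic/hyperbolic trichotomy on $N_0=G_0\cap A^{(B)}$, using Tits' fixed end and the Busemann homomorphism in the parabolic case (necessary for you, since your $N_0$ is not finitely generated, whereas the paper's non-parabolicity comes from Lemma~\ref{cof} applied to the finitely generated $D$) and trapping the axis with disjoint-support conjugates in the hyperbolic case, which is in effect a hands-on rerun of part of Proposition~\ref{propess}. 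What each buys: the paper's route recycles machinery already built for Theorem~\ref{t1} and keeps the homological input hidden, while yours is more self-contained and makes explicit exactly why no finite-index subgroup maps onto $\mathbf Z$ or $D_\infty$ -- arguably a cleaner justification of the step the paper disposes of with ``Gruenberg plus residual finiteness of $D_\infty$''. Do note that several of your steps are asserted rather than proved and should be written out: the surjectivity of $N_0\to A_b$ needs the coset-counting argument (a proper image in one coordinate propagates along the infinite $B_0$-orbit and forces infinite index); finiteness of the coinvariants of a finite-index invariant subgroup of $(A^{\mathrm{ab}})^{(B)}$ needs finiteness of $H_1(B_0,F)$ for finite $F$ and finitely generated $B_0$ (your ``Pontryagin-dualise'' hint is the right idea but garbled as stated); the torsion leftover in $((A')^{\mathrm{ab}})_{A^{\mathrm{ab}}}$ is finite because $A'$ has finite index in the finitely generated $A$; and the $G_0$-invariance of the end in the parabolic case should be justified by uniqueness of the $N_0$-fixed end (two fixed ends would force a pointwise fixed line).
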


\begin{example}\label{exa}
If $G=F\wr \textnormal{SL}_3(\mathbf{Z})$ with $F$ any non-trivial finite group, then $G$ has hereditary Property (FA) by Theorem \ref{hfa}, but is not the quotient of any finitely presented group with the same property, by Theorem \ref{lastt}.
\end{example}

\begin{remark}Despite the analogy between Theorems \ref{t1} and \ref{hfa}, Theorem \ref{lastt} shows that Proposition \ref{fpfa} is false when (FA) is replaced by hereditary~(FA).\end{remark}

\begin{remark} Theorems \ref{lastt} and \ref{hfa} provide many instances (illustrated by Example \ref{exa}) of groups with hereditary Property~(FA), which are not quotients of finitely presented groups with hereditary Property~(FA). Here is another one, of a different kind. Let $\Gamma$ be the first Grigorchuk group \cite[Chap.~VIII]{slava}. This is a finitely generated group every proper quotient of which is finite; in particular it cannot be expressed as a non-trivial wreath product with an infinite quotient. Also, it is a finitely generated torsion group and therefore has hereditary Property~(FA). It follows however from \cite{grig} (see also \cite[Corollary 8]{bast}) that every finitely presented group mapping onto $\Gamma$ has a finite index subgroup mapping onto the free group. \end{remark}

\begin{remark}\label{vart}
Theorem \ref{t1} holds when Property (FA) replaced by (F$\mathbf{R}$), with a similar proof. 
\end{remark}

\begin{remark}\label{vartt}
It is not hard to extend Theorem \ref{hfa} to permutational wreath products. On the other hand, the extension of Theorem \ref{lastt} to permutational wreath products is more delicate.
\end{remark}

\noindent \textbf{Acknowledgements.}  We wish to thank Nikolay Nikolov, Armando Martino, Ashot Minasyan and Jo\"el Riou for valuable discussions and suggestions; we are grateful to Indira Chatterji, Luc Guyot and Alain Valette for reading the manuscript and their comments.

\section{Property (FA)} 

In this part, we prove Theorem \ref{t1} and Proposition \ref{fpfa}. If a group $G$ acts on a set $X$, we denote by $X^G$ the set of $G$-fixed points in $X$. We think of each tree as the set of its own vertices. We have the two following standard lemmas.

\begin{lemma} \label{lem} Suppose that a group $H$ acts on a tree $T$ without inversions. Let $A$ and $B$ be subgroups of $H$ such that $T^A$ and $T^B$ are non-empty. If $[A,B]=1$ then $T^A \cap T^B \neq \emptyset$. \end{lemma}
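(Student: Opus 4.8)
The plan is to lean on two elementary facts about actions on trees: that the fixed-point set of a subgroup is empty or a subtree, and that a nonempty subtree of a tree admits a unique nearest-point projection. First I would write $T^A=\bigcap_{a\in A}T^a$. Since $T^A$ is assumed nonempty, each $T^a$ is nonempty, i.e.\ every $a\in A$ is elliptic; because $H$ acts without inversions, each such $T^a$ is an honest (nonempty) subtree of $T$. An intersection of subtrees of a tree is convex, so $T^A$ is a nonempty subtree of $T$, and likewise $T^B$ is a nonempty subtree.

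Next I would use $[A,B]=1$ to see that $B$ stabilises $T^A$ setwise: for $b\in B$, $a\in A$, $t\in T^A$ one has $a(bt)=(ab)t=(ba)t=b(at)=bt$, so $bt\in T^A$. Now fix any vertex $v\in T^B$ and let $p$ be the vertex of $T^A$ at minimal distance from $v$; this exists and is unique since $T^A$ is a nonempty subtree of a tree. For each $b\in B$, the vertex $bp$ lies in $T^A$ (as $B$ preserves $T^A$) and $d(bp,v)=d(bp,bv)=d(p,v)$ equals the minimal distance from $v$ to $T^A$, so uniqueness of the projection forces $bp=p$. Hence $p\in T^A\cap T^B$, which is the claim.

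I do not expect a genuine obstacle here. The only points that want a word of justification are that $T^A$ really is a subtree — this is exactly where the no-inversions hypothesis is used, ruling out an element that ``fixes'' an edge only by flipping its endpoints — and the uniqueness of the nearest-point projection onto a subtree; both are standard. Everything else reduces to the one-line commutator computation above.
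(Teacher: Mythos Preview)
Your proof is correct and follows essentially the same idea as the paper's: both use that commutativity makes one subgroup preserve the other's fixed subtree setwise, and then exploit the uniqueness of nearest points in a tree. The paper phrases it by contradiction via the unique minimal geodesic segment joining $T^A$ and $T^B$ (which $A$ must then fix pointwise), while you argue directly via the projection of a chosen $v\in T^B$ onto $T^A$; these are the same mechanism with the roles of $A$ and $B$ swapped.
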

\begin{proof}On the contrary, suppose that $T^A \cap T^B$ is empty. 
Then, there is a unique geodesic segment $\alpha$ in $T$, of minimal length, joining $T^A$ and $T^B$. However, as $A$ and $B$ commute, $A$ preserves the set $T^B$. This implies that $A$ pointwise fixes the geodesic segment $\alpha$, hence fixes at least one element of $T^B$, which contradicts the assumption that $T^A \cap T^B$ is empty.\end{proof} 

We say that an action of a group $G$ on a tree $T$ is {\it parabolic} if every element of $G$ has a fixed point, but there is no global fixed point.

\begin{lemma}
If $G$ has a parabolic action on a tree, then $G$ is the union of a properly increasing sequence of subgroups.
\label{cof}
\end{lemma}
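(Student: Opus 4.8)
The plan is to exploit the combinatorial structure of a parabolic action: every element fixes a vertex, but the whole group does not, so the fixed-point sets of individual elements must ``escape to infinity'' collectively. First I would fix a parabolic action of $G$ on a tree $T$ and a base vertex $v_0$. For each $g\in G$ the fixed set $T^g$ is a nonempty subtree, and I would look at how far $g$ ``moves'' $v_0$; more precisely, for a finite subset $S\subseteq G$ let $T^S=\bigcap_{s\in S}T^s$, and note that since $\langle S\rangle$ is a finitely generated subgroup and any finitely generated group with a parabolic-type action... — rather, the cleaner route is: by Serre's theory, a finitely generated group acting on a tree with every element elliptic has a global fixed point (this is the finite case of the Bruhat--Tits/Serre fixed point argument, using Lemma \ref{lem}-type pairwise-intersection reasoning, or directly \cite[Corollary 3, p.65]{S}). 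Hence for every finitely generated subgroup $H\le G$ we have $T^H\neq\emptyset$.

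Next I would build the increasing sequence. Since $G$ is not finitely generated in the relevant sense — more precisely, since the action has no global fixed point, $G$ cannot itself be finitely generated with all elements elliptic, so in particular if $G$ were countable it is not finitely generated and one can exhaust it by an increasing chain of finitely generated subgroups $H_1\le H_2\le\cdots$ with union $G$, each $H_n\subsetneq H_{n+1}$. In general (allowing uncountable $G$) I would instead argue: pick any $v_1=v_0$ and set $H_n=\{g\in G: d(v_1,gv_1)\le n\}$? That is not a subgroup. So the right device is the ascending union of the subgroups $G_v$... no — the correct and standard argument is: consider the filtered poset of finitely generated subgroups of $G$; if this poset had a countable cofinal chain we would be done, and if not, a short argument shows $G$ would act with a global fixed point, contradiction. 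Concretely: suppose $G$ is not the union of a properly increasing sequence (i.e.\ has cofinality $\ne\omega$); then $G$ is not the union of any countable chain of proper subgroups, and in particular every countable subset of $G$ lies in a proper... — this needs care, so let me state the clean version below.

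The cleanest line, and the one I would actually write: assume for contradiction that $G$ has cofinality $\ne\omega$. Then for the action on $T$, I claim $G$ has a global fixed point. Indeed, consider the function $g\mapsto d(v_0,gv_0)$ and the subsets $H_n=\langle g\in G: d(v_0,gv_0)\le n\rangle$; these form an increasing chain of subgroups with union $G$, so by the cofinality hypothesis $H_n=G$ for some $n$, i.e.\ $G$ is generated by a set of elements each moving $v_0$ a bounded amount. Then consider further the chain of finitely generated subgroups of this generating set; again by cofinality $\ne\omega$ one reduces to $G$ finitely generated, and then Serre's finite fixed-point lemma gives $T^G\neq\emptyset$ since every element is elliptic — contradicting parabolicity. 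Hence $G$ has cofinality $\omega$, i.e.\ is the union of a properly increasing sequence of subgroups.

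The main obstacle is getting the reduction to the finitely generated case right without circularity: one must be careful that ``cofinality $\ne\omega$'' genuinely lets us pass from $G$ to a finitely generated subgroup on which the action is still elliptic-on-every-element and then invoke Serre's lemma that such a finitely generated group fixes a point. Once that reduction is clean, the conclusion is immediate, and indeed this lemma is essentially a restatement of the equivalence recalled in the introduction (Serre's characterization: a group with no global fixed point on some tree, which is not a nontrivial amalgam and has no $\mathbf Z$-quotient, must fail the cofinality condition). I would in fact just cite \cite[I.6.5]{S} for the finitely generated fixed-point statement and keep the proof to a few lines.
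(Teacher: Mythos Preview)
Your argument is correct for countable $G$: once you know every finitely generated subgroup fixes a vertex (Serre's lemma, \cite[I.6.5]{S}), a countable $G$ with a parabolic action cannot be finitely generated, and exhausting it by $H_n=\langle g_1,\dots,g_n\rangle$ gives the required chain. That part is fine and is a legitimate alternative to the paper's route in the countable setting.

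The genuine gap is in the uncountable case. Your ``clean version'' asserts that from cofinality $\ne\omega$ one can ``reduce to $G$ finitely generated'' by considering the chain of finitely generated subgroups. This step fails for two reasons. First, the finitely generated subgroups form a directed system, not a chain, so the cofinality hypothesis does not apply to them. Second, and decisively, the conclusion is simply false: there exist uncountable groups with cofinality $\ne\omega$ --- the paper itself recalls this in the introduction, citing Koppelberg--Tits \cite{KT} --- and such groups are certainly not finitely generated. So the contradiction you are aiming for (global fixed point via Serre's finite-generation lemma) cannot be reached this way. Your preliminary step, showing that $G$ is generated by the set $\{g:d(v_0,gv_0)\le n\}$ for some $n$, is correct but does not by itself yield a fixed point: that generating set may be uncountable, and Helly's lemma for subtrees (Lemma~\ref{tree}) only handles finite families.

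The paper's proof avoids all of this by invoking Tits' theorem \cite[Proposition~3.4]{tits}: a parabolic action fixes an end, so there is a geodesic ray $(v_n)$ with every $g\in G$ fixing a tail. The subgroups $G_n=\{g:gv_k=v_k\ \text{for all}\ k\ge n\}$ then form the desired properly increasing chain directly, with no case distinction and no reduction to finite generation. If you want to keep your contrapositive strategy in full generality, you would essentially need to reprove this fixed-end statement; otherwise, restrict your write-up to countable $G$ and say so explicitly.
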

\begin{proof}
By a classical theorem of Tits \cite[Proposition~3.4]{tits}, there exists an end of $T$ which is strongly fixed by $G$. In other words, there exists a geodesic ray $(v_n)$ such that for every $g\in G$, $g.v_n=v_n$ for $n$ large enough. Define the non-decreasing sequence of stabilizers $$G_n=\{g\in G:\;g.v_k=v_k,\forall k\ge n\}.$$
By assumption, $G=\bigcup G_n$. But $G\neq G_n$ because $G$ has no global fixed point.
\end{proof}

\noindent The main idea of the proof of Theorem \ref{t1} is given by the following result.
\begin{proposition}\label{propess}
Let $A$ and $B$ be groups. Let $X$ be a $B$-set with $X^B=\emptyset$, and write $W=A^{(X)}$. Suppose that the permutational wreath product $G=A\wr_X B=W\rtimes B$ acts without inversions on a tree $T$. Assume that $T^B\neq\emptyset$, $T^W=\emptyset$, and that for any $x$, the action of $A_x$ on $T$ is not parabolic. Then there exists a unique geodesic line $\mathcal{L}\subset T$ preserved by $W$. Moreover, the $W$-action on $\mathcal{L}$ is non-trivial and by translations.
\end{proposition}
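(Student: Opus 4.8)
The plan is to run the standard dictionary from Bass--Serre theory between commuting subgroups of $G$ and their fixed subtrees and translation axes in $T$. Throughout I would freely use that every isometry of a tree without inversions is elliptic or hyperbolic, that two commuting hyperbolic isometries have the same axis, that an isometry commuting with a hyperbolic element $g$ preserves $\operatorname{axis}(g)$, and that finitely many pairwise-intersecting subtrees of a tree have a common vertex. The first step is to classify each $A_x$. Since $X^B=\emptyset$, choose $b\in B$ with $bx\neq x$; then $A_{bx}=bA_xb^{-1}$ is a summand of $A^{(X)}$ distinct from $A_x$, hence commutes with it. If $g_1,g_2$ are hyperbolic elements of $A_x$, then $bg_1b^{-1}\in A_{bx}$ is hyperbolic and commutes with both, so $\operatorname{axis}(g_1)=\operatorname{axis}(bg_1b^{-1})=\operatorname{axis}(g_2)$. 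Therefore either $A_x$ has no hyperbolic element, in which case it is elliptic (being non-parabolic), or all its hyperbolic elements share one axis, which is then the unique $A_x$-invariant line.

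The heart of the proof is to exclude the case where \emph{every} $A_x$ is elliptic. In that case each $w\in W=A^{(X)}$ is a product of finitely many pairwise-commuting elliptic elements --- its nontrivial coordinates, lying in distinct summands --- so by Lemma~\ref{lem} their fixed subtrees pairwise intersect, hence (being finitely many) have a common vertex, and $w$ is elliptic. Since $T^W=\emptyset$, the $W$-action is then parabolic, so by the theorem of Tits used in the proof of Lemma~\ref{cof} there is an end $\xi$ of $T$ fixed by $W$ (unique, as $W$ has no hyperbolic element and $T^W=\emptyset$). As $W$ is normal in $G$, the group $B$ fixes $\xi$, and since $B$ also fixes a vertex $v_0\in T^B$ it fixes pointwise the geodesic ray $\rho=[v_0,\xi)$. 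For each $x$, $A_x$ fixes $\xi$ and a vertex, so $T^{A_x}\cap\rho$ is a tail $[v_{n_x},\xi)$ of $\rho$ for a well-defined integer $n_x\ge 0$. Conjugation by $b\in B$ carries $T^{A_x}$ to $T^{A_{bx}}$ while fixing $\rho$ pointwise, so $n_{bx}=n_x$; thus $x\mapsto n_x$ is constant on $B$-orbits, hence bounded since $X$ has finitely many orbits. With $N=\max_x n_x$ we get $v_N\in T^{A_x}$ for all $x$, i.e. $v_N\in T^W$, contradicting $T^W=\emptyset$. (This step genuinely uses that $X$ has finitely many $B$-orbits.)

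Consequently some $A_{x_0}$ contains a hyperbolic element $g$ and preserves $\mathcal{L}:=\operatorname{axis}(g)$. For $z\neq x_0$ the summand $A_z$ commutes with $g$, hence preserves $\operatorname{axis}(g)=\mathcal{L}$, so $W=\langle A_z:z\in X\rangle$ preserves $\mathcal{L}$, and the action is non-trivial since $g$ translates along $\mathcal{L}$. Uniqueness is immediate: a $W$-invariant line is invariant under the hyperbolic $g\in W$, so it contains, hence equals, $\operatorname{axis}(g)=\mathcal{L}$. To rule out reflections I would pick $b_1\in B$ with $x_1:=b_1x_0\neq x_0$; then $g_1:=b_1gb_1^{-1}\in A_{x_1}$ is hyperbolic and commutes with $g$, so $\operatorname{axis}(g_1)=\mathcal{L}$ and $g_1$ also translates along $\mathcal{L}$. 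Now every summand $A_z$ preserves $\mathcal{L}$ and centralises a hyperbolic element with axis $\mathcal{L}$ --- namely $g$ if $z\neq x_0$, and $g_1\in A_{x_1}$ if $z=x_0$ --- so, since the centraliser of a nontrivial translation in $\operatorname{Isom}(\mathbf{Z})$ consists of translations, $A_z$ acts on $\mathcal{L}$ by translations; hence so does $W$.

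The step I expect to be the real obstacle is the middle one, ruling out a parabolic $W$-action. The classification of the $A_x$ and the passage from the lineal case to ``$W$ preserves a line, by translations'' are routine uses of the commuting-subgroups dictionary, whereas excluding parabolicity is exactly where $T^B\neq\emptyset$ (to produce the $B$-fixed ray $\rho$), the finiteness of the set of $B$-orbits (to bound the heights $n_x$), and the non-parabolicity of the $A_x$ (used in the classification step) all have to be brought together.
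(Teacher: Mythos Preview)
Your proof is correct, and the lineal half (once some $A_{x_0}$ contains a hyperbolic element, build $\mathcal{L}$, propagate to all $A_z$ by commutation, get uniqueness, and rule out reflections via a second hyperbolic in a different summand) is essentially the paper's argument. The genuine difference is in how you exclude the case that every $A_x$ is elliptic. The paper first treats a single orbit and argues by closest-point projection: assuming $T^{A_o}\neq\emptyset$, it sets $u^x$ to be the vertex of $T^{A_x}$ nearest $T^B$, uses $T^{A_o}\cap T^{A_x}\neq\emptyset$ and $B$-equivariance of distances to show $u^x$ is constant on the orbit, hence $B$-fixed, forcing $T^{A_o}\cap T^B\neq\emptyset$ and then $T^G\neq\emptyset$; the non-transitive case is reduced to this by Helly (picking an orbit $X_i$ with $T^{A^{(X_i)}}=\emptyset$). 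You instead treat all orbits at once: show every $w\in W$ is elliptic (finite support plus Helly), so the $W$-action is parabolic, invoke Tits to get a unique fixed end $\xi$, deduce by normality that $B$ fixes the ray $[v_0,\xi)$, and then bound the heights $n_x$ orbit by orbit. The paper's route avoids passing through parabolicity of $W$ and Tits' theorem; yours is more conceptual in that it explains why the elliptic case collapses (the $W$-action would be parabolic toward an end that $B$ must share). Both proofs use that $X$ has finitely many $B$-orbits even though the statement does not say so; you were right to flag this, and the paper's own reduction to the transitive case makes the same implicit assumption.
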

\begin{proof}
Let us first prove the proposition with the extra-assumption that $G$ acts transitively on $X$, and we fix a basepoint $o$ in $X$. 

\begin{itemize}
\item First, we prove that $T^{A_o}=\emptyset$. Assume the contrary. For each $b\in B$, we have $b.T^{A_o}=T^{A_{b.o}}$; in particular, by transitivity, the subtree $T^{A_x}$ is nonempty for each $x \in X$. Moreover, for $x \neq o$, $A_o$ and $A_x$ commute. Therefore, by Lemma \ref{lem}, we obtain $T^{A_o} \cap T^{A_x} \neq \emptyset$. 

By transitivity of the $G$-action on $X$, we see that $G$ is generated by $A_x$ and $B$ for any~$x\in X$. Accordingly, if for some $x$, $T^{A_x}\cap T^B\neq\emptyset$, then we deduce that $T^G\neq\emptyset$ and in particular $T^W\neq\emptyset$, a contradiction. 

Therefore, $T^{A_x}\cap T^B=\emptyset$ for any~$x\in X$. Denote by $u^x$ the unique vertex in $T^{A_x}$ closest to $T^B$. Note that we have $u^{b.x}=b.u^x$ for all $b\in B$.
For any vertex $v$ in $T$, denote by $(v_0,v_1,\dots,v_{\ell(v)}=v)$ the geodesic segment joining $T^B$ to $v$ (so $\ell(v)=d(v,T^B)$). Since $T$ is a tree, if for some $x\in X$, $v$ belongs to the subtree $T^{A_x}$ and $k_x =d(T^B,T^{A_x})$, we have $u^x=v_{k_x}$. Now \begin{align*}k_{b.x} &=d(T^B,T^{A_{b.x}})=d(b.T^B,b.T^{A_x})\\ &=d(T^B,T^{A_x})=k_x.\end{align*} 
Picking $v$ in $T^{A_o}\cap T^{A_{b.o}}$, we deduce that $u^{b.o}=v_{k_{b.o}}=v_{k_o}=u^o$; thus $u^{b.o}=u^o$ for all $b$. So $u^o$ is invariant under $B$, hence $T^{A_o}\cap T^B\neq\emptyset$, a contradiction.

\item Now we know that $T^{A_o} = \emptyset$. If every element of $A_o$ fixes some element of $T$, the action of $A_o$ on $T$ is, by definition, parabolic; this is ruled out by hypothesis. Accordingly, $A_o$ contains an element $a$ acting hyperbolically on $T$. Pick $b\in B$ with $b.o\neq o$; then $a'=bab^{-1}$ also acts hyperbolically on $T$. Let $\mathcal{L},\mathcal{L}'$ denote the axes of $a$ and $a'$. For any $x\in X-\{o\}$ (resp.\ $\in X-\{b.o\}$), $A_x$ centralises $a$ (resp.\ $a'$), so $A_x$ preserves $\mathcal{L}$ (resp.\ $\mathcal{L}'$). In particular, $a'$ preserves $\mathcal{L}$, but since $\mathcal{L}'$ is the unique axis preserved by $a'$, we deduce that $\mathcal{L}=\mathcal{L}'$. Therefore, for any $x\in X$, $A_x$ preserves $\mathcal{L}$. Now we claim that $A_o$ preserves the orientation of $\mathcal{L}$; otherwise, it contains some element $a''$ having a unique fixed point on $\mathcal{L}$. Since $a'$ centralizes $a''$, we deduce that this point is also fixed by $a'$, a contradiction since $a'$ acts by non-trivial translation on $\mathcal{L}$. By conjugating, we deduce that for each $x$, $A_x$ preserves the orientation of $\mathcal{L}$, so the whole action of $W$ on $\mathcal{L}$ is by translations and non-trivial.
\end{itemize}
Finally we have to tackle the non-transitive case. Denote by $X_i$ the (finitely many) $B$-orbits of $X$. Consider an action of $G$ as in the statement of the proposition. Then, for some $i$, $T^{A^{(X_i)}}\neq\emptyset$. By the transitive case, $A^{(X_i)}$ preserves a unique line $\mathcal{L}\subset T$, on which it acts non-trivially and by translations. Set $Y=X-X_i$. Then $A^{(Y)}$ centralizes $A^{(X_i)}$, so preserves $\mathcal{L}$. Moreover, $A^{(Y)}$ also preserves the orientation of $\mathcal{L}$ (otherwise as in the transitive case, it contains a point with a unique fixed point, which is then fixed by a hyperbolic element in $A^{(X_i)}$, a contradiction).
\end{proof}

\begin{proof}[Proof of Theorem \ref{t1}] Under the conditions of the theorem, suppose that the permutational wreath product $G:= A \wr_X B$ has Property (FA). Then clearly $B$, being a quotient of $G$, has Property (FA). Moreover, $A$ cannot be written as a properly increasing union of a sequence of subgroups $(A_n)$, since otherwise (using $X\neq\emptyset$) $G$ would be the increasing union of its subgroups $A_n\wr_X B$ and would fail to have Property~(FA). Moreover, since $X$ has no one-element orbit, the abelianisation of $G$ is given by $G^{\textnormal{ab}} = (A^{\textnormal{ab}})^Y \times B^{\textnormal{ab}}$, where $Y$ denotes the orbit set $B\backslash X$. Hence the abelianisation of $A$ is also finite. 

Conversely, suppose that $B$ has property (FA) and the group $A$, which has finite abelianisation, cannot be written as a properly increasing union of its subgroups. Let $G$ act without inversions on a tree $T$. To verify that $G$ has property (FA) we need to prove that $T^G$ is non-empty. 

Write $W=A^{(X)}$. Suppose that $T^W\neq\emptyset$. Then this is a nonempty subtree the action on which factors through $B$. So by Property~(FA) for $B$, we obtain $T^G\neq\emptyset$.

Otherwise, $T^W=\emptyset$; $T^B\neq\emptyset$ by Property~(FA) for $G$, and the action of $A_x$ is not parabolic by Lemma \ref{cof}. So Proposition \ref{propess} implies that there is a non-trivial homomorphism from $A^{(X)}$ to $\mathbf{Z}$. This is impossible since $\textnormal{Hom}(A^{(X)},\mathbf{Z})=\textnormal{Hom}(A,\mathbf{Z})^X=\{0\}$.
\end{proof}

\begin{lemma}[I.6.5.10 in \cite{S}] \label{tree} Let $T_1, \dots, T_m$ ($m\ge 2$) be subtrees of a tree $T$. If the $T_i$ have pairwise nonempty intersection, then their intersection is non-empty. \end{lemma}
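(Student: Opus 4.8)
The plan is to recognise this as the Helly property for subtrees of a tree, and to prove it by induction on $m$, the crucial case being $m=3$. Throughout I will use the single structural fact that a subtree is geodesically convex: if $S$ is a subtree of $T$ and $x,y\in S$, then the geodesic segment $[x,y]$ is contained in $S$. In particular, the intersection of two subtrees is again a subtree (or empty), since it is connected by this convexity.

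For $m=2$ there is nothing to prove. For $m=3$, pick points $p_{ij}\in T_i\cap T_j$ for $\{i,j\}\subset\{1,2,3\}$, and consider the three points $p_{12},p_{13},p_{23}$. I claim the three geodesic segments joining them pairwise share a common point (the \emph{median} of the three points). To see this, travel from $p_{12}$ simultaneously along $[p_{12},p_{13}]$ and along $[p_{12},p_{23}]$; since $T$ is a tree these two geodesics agree on an initial segment, and I let $c$ be the last point at which they agree. Then $[p_{13},c]\subset[p_{13},p_{12}]$ and $[c,p_{23}]\subset[p_{12},p_{23}]$ are geodesics meeting only at $c$ (by maximality of $c$), so their concatenation is a path in $T$ with no backtracking, hence a geodesic; thus $c\in[p_{13},p_{23}]$. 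Now convexity gives $c\in[p_{12},p_{13}]\subset T_1$, $c\in[p_{12},p_{23}]\subset T_2$, and $c\in[p_{13},p_{23}]\subset T_3$, so $c\in T_1\cap T_2\cap T_3$.

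For the inductive step, assume $m\ge 4$ and that the statement holds for $m-1$ subtrees. Given $T_1,\dots,T_m$ with pairwise nonempty intersections, set $S=T_{m-1}\cap T_m$; this is a nonempty subtree. I claim $T_1,\dots,T_{m-2},S$ have pairwise nonempty intersections: for $i,j\le m-2$ this is the hypothesis, while $T_i\cap S=T_i\cap T_{m-1}\cap T_m$ is nonempty by the case $m=3$ applied to the pairwise-intersecting triple $T_i,T_{m-1},T_m$. By the induction hypothesis, $T_1\cap\dots\cap T_{m-2}\cap S=T_1\cap\dots\cap T_m$ is nonempty, as desired.

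The only place where the tree structure is genuinely used — and the one step that would fail in a general graph — is the existence of the median point $c$ in the case $m=3$, which rests on the acyclicity of $T$ (equivalently, on the fact that a concatenation of two geodesics meeting only at their common endpoint is again a geodesic). Everything else is formal manipulation with geodesic convexity.
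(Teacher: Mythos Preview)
Your proof is correct and is the standard argument for the Helly property of subtrees (reduce by induction to the case $m=3$, then use the median of the tripod spanned by points in the three pairwise intersections). The paper does not supply its own proof of this lemma; it merely records the statement with a citation to Serre~\cite{S}, so there is nothing to compare your argument against.
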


\begin{proof}[Proof of Proposition \ref{fpfa}] We restrict ourselves to the case when $X$ is $B$-transitive; the reader can easily deduce the general case.

Fix a basepoint $o$ in $X$ and let $C\subset B$ be its stabilizer.
Fix finite generating subsets $S_A,S_B$ of $A$ and $B$. Consider the group $K$ obtained from the free product $A\ast B$ by adding the relators 
\begin{itemize}
\item[(1)] $[c,a]$ ($c\in C\cap S_B$, $a\in S_A$);
\item[(2)] $[bab^{-1},a']$ ($b\in S_B-C$, $a,a'\in S_A$). 
\end{itemize}
In $K$, for $b\in B$ write $A_b=bAb^{-1}$. Let $K$ act on a tree $T$. By Lemma \ref{tree}, it is enough to check that any pair of generators of $K$ has a common fixed point. Since $B$ has Property~(FA), every pair in $S_B$ has a common fixed point. The proof now proceeds along the same lines as that of Proposition~\ref{propess}.
\begin{itemize}
\item Suppose $T^{A_1}\neq\emptyset$. Thus, every pair of generators in $S_A$ has a common fixed point. It remains to show that any pair $(a,s)\in S_A\times S_B$ has a common fixed point.
Let us first check that $T^{A_1}\cap T^{A_s}\neq\emptyset$.
\begin{itemize}
\item If $s\in C$, then, thanks to relators of type~(1), we have $[s,A]=\{1\}$, hence $A_s=A_1$, so this is clear;
\item If $s\notin C$, then the relators of type~(2) assure us that $[A_{1},A_s]=\{1\}$. Hence by Lemma \ref{lem}, $A_1$ and $A_s$ have a common fixed point.
\end{itemize}
As in the proof of Theorem \ref{t1}, we denote the point in $T^{A_b}$ closest to $T^B$ as  $u^b$. Now, $u^1=s.u^1=u^s$ for all $s\in S_B$, so this is a fixed point for $B$. Thus, $a$ and $s$ have a common fixed point.
\item Suppose $T^{A_1}=\emptyset$. Observe that $S_B-C$ is non-empty else $X$ would have contained only one element. Pick $b\in S_B-C$. The subgroups $A_1$ and $A_b$ contain elements $a$, $a'$ respectively whose actions on $T$ are hyperbolic. The axis $\mathcal{L}$ of $a'$ is stabilised by $A_1$. Moreover every element of $A_1$ preserves the orientation of $\mathcal{L}$ for otherwise, its unique fixed point is a fixed point for $a'$. Thus the action of $A_1$ on $\mathcal{L}$ is by translations. Since $\textnormal{Hom}(A,\mathbf{Z})=\{0\}$, we deduce that $A_1$ acts trivially, contradicting the existence of $a$.\qedhere
\end{itemize}
\end{proof} 

\section{Hereditary Property (FA)}

\noindent We need the following classical lemma.
\begin{lemma}
Let $G_0$ be a group and let $(N_k)_k$ be a non-decreasing sequence of normal subgroups of $G_0$. Set $G=G_0/N$, where $N=\bigcup N_k$ and $G_k = G_0/N_k$. Let $H$ be a finitely presented group. Then every homomorphism $f:H\to G$ lifts to a homomorphism $f_k:H\to G_k$ for some $k$. If in addition, $f$ is surjective and $G_0$ is finitely generated, then $f_k$ can be chosen to be surjective. 
\end{lemma}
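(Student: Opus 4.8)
The plan is to use the universal property of finite presentations together with the fact that $N=\bigcup_k N_k$ is a directed union. Let $H=\langle x_1,\dots,x_m \mid r_1,\dots,r_n\rangle$ be a finite presentation. The homomorphism $f:H\to G=G_0/N$ is determined by the images $f(x_i)\in G$; choose arbitrary lifts $g_i\in G_0$ of $f(x_i)$ under the quotient map $\pi:G_0\to G$. These $g_i$ need not satisfy the relations $r_j$ in $G_0$, but since $f$ is a homomorphism, each word $r_j(g_1,\dots,g_m)$ lies in $\ker\pi=N=\bigcup_k N_k$. As there are only finitely many relators, and the $N_k$ are non-decreasing, there is a single index $k$ with $r_j(g_1,\dots,g_m)\in N_k$ for all $j=1,\dots,n$. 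The assignment $x_i\mapsto g_iN_k$ then respects all defining relations of $H$, hence extends to a homomorphism $f_k:H\to G_k=G_0/N_k$; and by construction the composite $H\xrightarrow{f_k} G_k\to G$ (the natural projection $G_k\to G$ induced by $N_k\subseteq N$) sends $x_i$ to $f(x_i)$, so it equals $f$. Thus $f_k$ lifts $f$.

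For the surjectivity addendum, suppose $f$ is surjective and $G_0=\langle y_1,\dots,y_p\rangle$ is finitely generated. Since $f$ is onto, for each generator $y_t$ the element $\pi(y_t)\in G$ lies in the image of $f$, so we may pick $h_t\in H$ with $f(h_t)=\pi(y_t)$. Now enlarge the index: using again that $N=\bigcup N_k$ is a directed union, after replacing $k$ by a larger index we may additionally arrange that $f_k(h_t)$ and $y_tN_k$ have the same image in every $G_\ell$ with $\ell\ge k$ — more precisely, the element $y_t^{-1}\cdot(\text{word in the }g_i\text{ representing }f_k(h_t))$ lies in $N$, hence in some $N_{k'}$, and taking $k'$ large enough (and $\ge k$) simultaneously for all finitely many $t$ we get $f_{k'}(h_t)=y_tN_{k'}$ in $G_{k'}$. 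Then the image of $f_{k'}$ contains all the generators $y_tN_{k'}$ of $G_{k'}$, so $f_{k'}:H\to G_{k'}$ is surjective, and it still lifts $f$.

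The only mild subtlety — and the step to state carefully — is the second enlargement of the index in the surjective case: one must note that $f_k(h_t)$ is represented, via the chosen lifts $g_i$ of the generators $x_i$, by a fixed word in the $g_i$'s evaluated in $G_0$, so that comparing it with $y_t$ is a question about a specific element of $G_0$ lying in $N=\bigcup N_\ell$; finiteness of the generating set of $G_0$ then lets us absorb all the required conditions into one index. Everything else is the routine verification that a map defined on generators and killing the relators descends to the presented group. No genuine obstacle arises; the lemma is purely a compactness statement about finitely presented groups mapping into directed unions of quotients.
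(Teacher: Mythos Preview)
Your proof is correct. For the first part, the paper takes a more conceptual route: it invokes the general fact that a group $H$ is finitely presented if and only if the functor $\textnormal{Hom}(H,-)$ commutes with inductive limits, and then simply observes that $G=\underrightarrow{\lim}\,G_k$. Your argument unpacks exactly this by hand, using the finite presentation explicitly to show that the finitely many relator-words land in some $N_k$; this is more elementary and self-contained, while the paper's version highlights the categorical reason behind the statement. For the surjectivity addendum, the two proofs are essentially identical: the paper phrases it as finding, for each generator $s$ of $G_0$, an element $g_s\in\ker(G_k\to G)$ with $sg_s\in f_k(H)$, and then passing to a larger index $\ell$ where all the $g_s$ vanish---precisely your argument that $y_t^{-1}\cdot(\text{word in the }g_i)$ lies in some $N_{k'}$.
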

\begin{proof}
It is a standard result that a group $H$ is finitely presented (if and) only if the functor $\textnormal{Hom}(H,-)$ commutes with inductive limits (see \cite{AR}, Example~1.2(5) and Corollary~3.13, which reach far beyond the realm of groups). Since $G=\underrightarrow{\lim}\,G_k$, this proves the existence of an $f_k$. 

Suppose now that $f$ is surjective and $G_0$ has a finite generating subset~$S$. For every $s\in S$, the image of $s$ into $G$ belongs to the image of $f$; thus there exists $g_s$ in the kernel of $G_k\to G$ such that $sg_s$ belongs to $f_k(H)$. Since $S$ is finite, there exists $\ell\ge k$ such that $g_s=1$ in $G_\ell$ for all~$s\in S$. Thus the composite map $f_\ell:H\to G_k\to G_\ell$ lifts $f$ and is surjective.
\end{proof}

\begin{proof}[Proof of Theorem \ref{lastt}]
If $F$ is a finite subset of $B-\{1\}$, define
$\Gamma(A,B,F)$ as the quotient of $A\ast B$ by the ``relators" $[A,uAu^{-1}]$ for $u\le B$. Let $(u_k)_{k\ge 1}$ be an enumeration of $B-\{1\}$ and define, for $k\le\infty$
$$G_k=\Gamma(A,B,\{u_i:i\le k\}).$$
Note that $G_\infty=A\wr B$. 

The group $\Gamma(A,B,F)$ has a natural semidirect product decomposition $M\rtimes B$, where $M=M(A,B,F)$ is a ``graph product" (see \cite[Section~2]{Cwp}). This means that $M$ is the free product of copies $A_b$ of $A$ indexed by $b\in B$ and subject to the relations $[A_b, A_{bs}]=1$ for all $s\in F$, and $u\in B$ shifts $A_b$ to $A_{ub}$.  

Let $H$ be a finitely presented group having $G$ as a quotient. Then $H$ has $G_k$ as a quotient for some $k$. So we only have to prove that $G_k$ has a finite index subgroup mapping onto a free group.

We prove the following general statement. Let $A$ and $B$ be groups such that $A$ has at least one non-trivial finite quotient $A_1$ and $B$ is residually finite. Suppose $F$ is a symmetric subset of $B-\{1\}$ such that there exists $c,d\in B$ satisfying $\{c,d,c^{-1}d\}\cap (F\cup\{1\})=\emptyset$. Then $\Gamma(A,B,F)$ has a finite index subgroup mapping onto a non-abelian free group.

First observe that $\Gamma(A,B,F)$ maps onto $\Gamma(A_1,B,F)$, so replacing $A$ by $A_1$ if necessary, we can assume that $A$ is finite and non-trivial. Let $N$ be a normal subgroup of finite index in $B$ such that $F\cup \{1,c,d\}$ is mapped injectively into $B'=B/N$. Since the image of $F$ in $B'$ (still written $F$) is nontrivial, the group $\Gamma(A,B',F)$ is well-defined; this is a quotient of $\Gamma(A,B,F)$. Using the graph product description given above, we write $\Gamma(A,B',F)=M\rtimes B'$, with $M=M(A,B',F)$. So $M$ is a finite index subgroup of $\Gamma(A,B',F)$. Taking the quotient of $M$ by the normal subgroup generated by all $A_b$ for $b\neq\{1,c,d\}$, we see that all relators become trivial and therefore we obtain the free product $A_1\ast A_c\ast A_d$. The latter group has a non-abelian free subgroup of finite index.
\end{proof}

\begin{proposition}\label{p2}Consider the short exact sequence of groups:  \[ 1 \rightarrow A \rightarrow G \rightarrow B \rightarrow 1\] \noindent Assume that $A$ does not contain any nonabelian free subgroup, and that $G$ is finitely generated (or more generally, is not the union of a properly increasing sequence of subgroups) and does not map onto the integers or the infinite dihedral group. Then, $G$ has Property (FA) if and only if $B$ has Property (FA).
\end{proposition}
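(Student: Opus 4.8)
The plan is as follows. The ``only if'' direction is immediate: $B$ is a quotient of $G$, and Property~(FA) passes to quotients. So assume that $B$ has Property~(FA), let $G$ act on a tree $T$ without inversions, and let us show that $T^G\neq\emptyset$. If the subtree $T^A$ is nonempty, we are quickly done: it is $G$-invariant because $A$ is normal in $G$, the induced $G$-action on it factors through $B=G/A$, and since $B$ has Property~(FA) we get $(T^A)^B=T^G\neq\emptyset$. It therefore remains to rule out the case $T^A=\emptyset$, and for this I will use the remaining hypotheses.

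Assume $T^A=\emptyset$. Since $A$ contains no nonabelian free subgroup, its action on $T$ is not of general type --- recall that a group acting on a tree with two hyperbolic elements whose axes share no endpoint contains a nonabelian free subgroup by a ping-pong argument. Together with $T^A=\emptyset$, the standard classification of group actions on trees leaves exactly two cases: either (a)~$A$ stabilises a line $\mathcal{L}\subset T$ --- which is then unique, being the axis of any hyperbolic element of $A$ --- and acts on it with unbounded orbits; or (b)~$A$ stabilises no line but fixes a unique end $\xi$ of $T$ (this covers the parabolic case, via the theorem of Tits quoted in Lemma~\ref{cof}, and the focal case).

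In case~(a), uniqueness of $\mathcal{L}$ together with the normality of $A$ forces $G$ to preserve $\mathcal{L}$, so we get a homomorphism $\varphi\colon G\to\mathrm{Isom}(\mathcal{L})\cong D_\infty$. The image $\varphi(A)$ cannot be finite: a finite subgroup of $D_\infty$ is trivial or of order two, and in either case $A$ would fix a vertex or an edge-midpoint of $\mathcal{L}$, contradicting respectively $T^A=\emptyset$ or the absence of inversions. Hence $\varphi(G)$ is an infinite subgroup of $D_\infty$, so it is isomorphic to $\mathbf{Z}$ or to $D_\infty$; in either case $G$ maps onto $\mathbf{Z}$ or onto the infinite dihedral group, contrary to hypothesis.

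In case~(b), uniqueness of $\xi$ and normality of $A$ force $G$ to fix $\xi$; fixing a geodesic ray towards $\xi$ gives the Busemann homomorphism $\beta\colon G\to\mathbf{Z}$, whose kernel is exactly the set of elliptic elements of $G$. If $\beta\neq 0$ then $G$ maps onto $\mathbf{Z}$, contrary to hypothesis. If $\beta=0$ then every element of $G$ is elliptic, while $G$ has no global fixed point (a fortiori, since $A$ has none), so the $G$-action on $T$ is parabolic; by Lemma~\ref{cof}, $G$ is a union of a properly increasing sequence of subgroups, again contrary to hypothesis. Both cases being excluded, $T^A\neq\emptyset$, which completes the proof. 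The delicate point is the second paragraph: getting the classification of tree actions right and, in particular, the uniqueness of the invariant line in~(a) and of the fixed end in~(b) --- this is precisely what makes normality of $A$ propagate the constraint from $A$ to $G$ and corner $G$ into one of the forbidden behaviours.
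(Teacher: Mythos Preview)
Your proof is correct and takes a genuinely different route from the paper's. The paper argues by contrapositive at the level of splittings: if $G$ fails (FA), then the cofinality and no-$\mathbf{Z}$-quotient hypotheses force $G$ to split as a nontrivial amalgam $H\ast_K L$; the $D_\infty$ hypothesis rules out the degenerate case (index two on both sides), and then an external result \cite[Proposition~7]{Cicc} is invoked to show that the largest normal subgroup $\textnormal{NF}(G)$ without nonabelian free subgroups---hence $A$---lies in the edge group $K$, so that $B=G/A$ inherits the nontrivial splitting $(H/A)\ast_{K/A}(L/A)$. You instead work directly with the tree action and the standard lineal/focal/parabolic/general-type classification, using normality of $A$ to transport the unique invariant line or end from $A$ to $G$, and then reading off one of the forbidden surjections (onto $\mathbf{Z}$ or $D_\infty$) or the forbidden cofinality via Lemma~\ref{cof}. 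Your approach is more self-contained within the paper's geometric setup and avoids the outside citation; the paper's approach is terser once one grants the structural fact about $\textnormal{NF}(G)$, and has the mild advantage of producing an explicit splitting of $B$ rather than merely denying it a fixed point.
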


\begin{proof}
If $G$ is a group, define $\textnormal{NF}(G)$ to be the largest normal subgroup of $G$ without nonabelian free subgroups. The subgroup $\textnormal{NF}(G)$ is always well-defined. Indeed, let $N_1$ and $N_2$ be normal subgroups of $G$ with no nonabelian free subgroups. Then $N_1 N_2$ is also normal and the second isomorphism theorem implies that $N_1N_2$ cannot contain a non-abelian free subgroup.

The ``only if" part of the proposition is clear. Conversely, suppose that $G$ fails to have Property (FA). Then $G$ splits as a non-trivial amalgam $H\ast_K L$. If the amalgam were degenerate ($K$ has index two in both $H$ and $L$), then $G$ would map onto the infinite dihedral group. Therefore, we can apply \cite[Proposition~7]{Cicc}, which says in particular that $\textnormal{NF}(G)$ is contained in $K$. Since $A$ is by definition contained in $\textnormal{NF}(G)$, this shows that $G/A  \cong B$ splits as a non-trivial amalgam $(H/A)\ast_{K/A}L/A$, and therefore fails to have Property~(FA).
\end{proof}

\begin{proof}[Proof of Theorem \ref{hfa}]
The fact that the first condition implies the second one is as straightforward as the analogous implication for Theorem~\ref{t1}, so we do not repeat the argument.

So assume that $A$ has finite abelianisation and $B$ has hereditary Property~(FA).

We first prove the implication when $A$ has {\it trivial} abelianisation, as the proof is then easier. In this case, by Gruenberg \cite{Gruenberg} every finite index subgroup of $G$ contains the normal subgroup $A^{(B)}$ and is therefore of the form $A^{(B)}\rtimes C$ where $C$ has finite index in $B$; since $B$ is supposed to be infinite, $C$ is non-trivial. This group $A^{(B)}\rtimes C$ is a permutational wreath product (with a non-transitive free action), so Theorem \ref{t1} applies.

Before passing to the general case, we need to consider the special case when $A$ is abelian (and thus finite). Every finite index subgroup $H$ of $G$ then is an extension of groups such that the kernel $K$ is torsion (and abelian) and the quotient is a finite index subgroup of $B$. We claim that $H$ has no quotient $Q$ isomorphic to the group of integers or the infinite dihedral group. Suppose on the contrary that $Q \cong \mathbb{Z}$ or $D_\infty$. Since $Q$ has no non-trivial torsion normal subgroup, the image of $K$ into $Q$ is trivial, so $Q$ is a quotient of $B$. But this is absurd since $B$ has Property~(FA) by hypothesis. So we can apply Proposition \ref{p2} to deduce that $H$ has Property~(FA).

Suppose now, in general, that the derived subgroup $D$ of $A$ has finite index in $A$, and that $H$ has finite index in $G$; let $H$ act on a tree $T$. Then Gruenberg \cite{Gruenberg} implies that $H$ contains $D^{(B)}$. We claim that $T'=T^{D^{(B)}}\neq\emptyset$. Suppose $D^{(B)}$ has no global fixed point. Set $C=B\cap H$. The group $G'=D^{(B)}\rtimes C$ is a permutational wreath product; $B$ being a free $C$-set. Since $B$ is infinite, $C$ is non-trivial, so Proposition \ref{propess} applies. Therefore, $D^{(B)}$ preserves a unique line, on which it acts by non-trivial translations. Since $D^{(B)}$ is normal, this line is preserved by $G'$. The action on this line is given by a homomorphism from $G'$ to the infinite dihedral group. Since $D_\infty$ is residually finite, Gruenberg's theorem imples that the homomorphism is trivial on $D^{(B)}$. This is impossible and so $T'=T^{D^{(B)}}\neq\emptyset$.

Finally, since $D^{(B)}$ is normal in $H$,  we know that $T'$ is $H$-invariant and that the action of $H$ on $T'$ factors through $H/D^{(B)}$, which is a subgroup of finite index of $(A/D)\wr B$. By the special case when the base group is abelian, we deduce that there is a fixed point. This proves that $H$ has Property~(FA). \end{proof}

\end{document}